\documentclass[11pt]{article}
\usepackage{amssymb,amsmath,amsthm,enumerate,cite,geometry,float,tikz,pgfplots}
\geometry{a4paper,left=2cm,right=2cm, top=2cm, bottom=2.5cm}
\newtheorem{theorem}{Theorem}
\newtheorem{corollary}[theorem]{Corollary}
\usepackage{lineno}

\usepackage{setspace}
\allowdisplaybreaks

\let\leq\leqslant
\let\geq\geqslant
\let\epsilon\varepsilon
\let\emptyset\varnothing

\begin{document}
\onehalfspace
\title{On the number of maximal independent sets and \\
maximal induced bipartite subgraphs in $K_4$-free graphs}
\author{Thilo Hartel$^1$ \and Lucas Picasarri-Arrieta$^{2}$ \and  Dieter Rautenbach$^1$}
\date{}
\maketitle
\begin{center}
$^1$ Ulm University, Ulm, Germany\\
\texttt{$\{$thilo.hartel,dieter.rautenbach$\}$@uni-ulm.de}\\[3mm]
$^2$ National Institute of Informatics, Tokyo, Japan\\
\texttt{lpicasarr@nii.ac.jp}
\end{center}

\begin{abstract}
Let $G$ be a $K_4$-free graph of order $n$ and 
let $k$ be an integer with $0\leq k\leq n$.
We show the existence of positive constants $\eta$ and $\nu$ such that $G$ has 
at most $(4-\eta)^{(5-\eta)k-n}(5-\eta)^{n-(4-\eta)k}$
maximal independent sets of order $k$
and 
at most $O\left((12-\nu)^{\frac{n}{4}}\right)$
maximal induced bipartite subgraphs.\\[3mm]
{\bf Keywords}: Maximal independent set; maximal induced bipartite subgraphs
\end{abstract}

\section{Introduction}

We consider finite, simple, and undirected graphs and use standard terminology.
For a graph $G$ of order $n$ and a non-negative integer $k$, 
let ${\rm MIS}(G)$ be the set of all maximal independent sets of $G$,
let ${\rm mis}(G)=|{\rm MIS}(G)|$,
let ${\rm mis}_k(G)=|\{ I\in {\rm MIS}(G): |I|=k\}$, and
let ${\rm mis}(G)_{\leq k}=|\{ I\in {\rm MIS}(G): |I|\leq k\}$.
A famous result established independently 
by Miller and Muller \cite{mimu}
and 
by Moon and Moser \cite{momo} 
is that 
\begin{eqnarray}\label{e1}
{\rm mis}(G) & \leq & 3^{\frac{n}{3}}.
\end{eqnarray}
Eppstein \cite{ep} showed 
\begin{eqnarray}\label{e2}
{\rm mis}_{\leq k}(G) & \leq & 3^{4k-n}4^{n-3k}.
\end{eqnarray}
The bound \eqref{e1} satisfied with equality if $G$ is the disjoint union of triangles.
Following Eppstein's inductive proof,
we show the following.

\begin{theorem}[Eppstein, refined]\label{theorem2}
If $G$ is a graph of order $n$ and $k$ is a non-negative integer, then
${\rm mis}_{\leq k}(G)\leq 3^{4k-n}4^{n-3k}$
with equality if and only if 
$G$ is the disjoint union of $k$ copies of $K_3$ or $K_4$.
\end{theorem}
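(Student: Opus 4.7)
The plan is to induct on $n$, refining Eppstein's proof to track the equality case. The base case $n = 0$ is immediate: the empty graph has ${\rm mis}_{\leq k}(\emptyset) = 1$, while $f(0,k) = (81/64)^k \geq 1$ with equality precisely at $k = 0$, matching the vacuous disjoint union of $0$ copies. For the inductive step, let $v$ be a vertex of minimum degree $d$. By maximality every MIS of $G$ must contain at least one vertex of $N[v]$, and for each $w \in N[v]$ the map $I \mapsto I \setminus \{w\}$ is a bijection between MISs of $G$ of size at most $k$ containing $w$ and MISs of $G - N[w]$ of size at most $k-1$. This yields
\[
{\rm mis}_{\leq k}(G) \leq \sum_{w \in N[v]} {\rm mis}_{\leq k-1}(G - N[w]),
\]
with equality if and only if every MIS of $G$ intersects $N[v]$ in exactly one vertex.

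Applying the inductive hypothesis to each $G - N[w]$ (which has $n - d_w - 1$ vertices, where $d_w$ is the degree of $w$), and writing $f(n,k) = 3^{4k-n} 4^{n-3k}$, a direct calculation gives ${\rm mis}_{\leq k-1}(G - N[w]) \leq (3/4)^{d_w} \cdot (16/27) \cdot f(n,k)$. Summing over $w \in N[v]$ and using $d_w \geq d$, the required bound reduces to the inequality $(d+1)(3/4)^d \leq 27/16$, which is elementary to verify for all integers $d \geq 0$ and is attained precisely at $d \in \{2,3\}$ (a consequence of the ratio $(d+2)/(d+1) \cdot 3/4$ being equal to one exactly at $d = 2$). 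This establishes ${\rm mis}_{\leq k}(G) \leq f(n,k)$.

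For the equality characterization, tightness forces all of the following simultaneously: (i) $d \in \{2,3\}$; (ii) $d_w = d$ for every $w \in N[v]$; (iii) every MIS meets $N[v]$ in exactly one vertex; and (iv) the inductive bound is attained for each $G - N[w]$. Condition (iii) forces $N[v]$ to induce a clique, since any two non-adjacent vertices of $N(v)$ extend to a common MIS with intersection at least two; hence $N[v]$ induces $K_3$ if $d = 2$ and $K_4$ if $d = 3$. Combined with (ii), no vertex of $N[v]$ has a neighbor outside $N[v]$, so $N[v]$ is a connected component of $G$. By induction applied to $G - N[v]$, the complement is the disjoint union of $k-1$ copies of $K_3$ or $K_4$, and hence $G$ itself is such a disjoint union of $k$ components. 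The converse direction is a direct count: $a$ copies of $K_3$ together with $b$ copies of $K_4$ with $a+b = k$ produce exactly $3^a 4^b = f(3a+4b, k)$ maximal independent sets. The main obstacle is the equality analysis: tightness must be extracted simultaneously at every sub-step, and conditions (ii) and (iii) must be combined carefully to conclude that $N[v]$ is an isolated clique component with no loose edges to the rest of the graph.
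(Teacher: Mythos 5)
Your decomposition is genuinely different from the paper's: you branch over the closed neighborhood of a \emph{minimum}-degree vertex in the Moon--Moser style, reducing all cases to the single inequality $(d+1)(3/4)^d\leq 27/16$, whereas the paper branches two ways ($G-u$ versus $G-N_G[u]$) on a \emph{maximum}-degree vertex and falls back on Eppstein's separate case analysis (degree-$1$ vertices, isolated vertices, cycles of length at least $4$) once the maximum degree is at most $2$. The quantitative part of your argument is correct: with $f(n,k)=3^{4k-n}4^{n-3k}$, the bound ${\rm mis}_{\leq k-1}(G-N[w])\leq (3/4)^{d_w}\cdot\tfrac{16}{27}\cdot f(n,k)$ checks out, and $(d+1)(3/4)^d$ is indeed maximized exactly at $d\in\{2,3\}$ with value $27/16$. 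This route is arguably cleaner, since it treats all degrees uniformly.

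There is, however, a gap in the equality analysis, at the step where you conclude that $N[v]$ induces a clique. The correct equality condition for your first inequality is that every maximal independent set \emph{of size at most $k$} meets $N[v]$ in exactly one vertex: maximal independent sets of size larger than $k$ are invisible to both sides of the identity $\sum_{w\in N[v]}{\rm mis}_{\leq k-1}(G-N[w])=\sum_{I}|I\cap N[v]|$, where $I$ ranges over maximal independent sets with $|I|\leq k$. When you take non-adjacent $x,y\in N(v)$ and extend $\{x,y\}$ to a maximal independent set, you have no a priori control on its size, so it need not violate your condition (iii). The gap is repairable using your own condition (iv): since equality forces $G-N[w]$ to be a disjoint union of $k-1$ copies of $K_3$ or $K_4$ for \emph{every} $w\in N[v]$, every maximal independent set of $G-N[w]$ has size exactly $k-1$, hence every maximal independent set of $G$ containing some $w\in N[v]$ --- that is, every maximal independent set of $G$ --- has size exactly $k$. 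With that observation the size restriction in (iii) becomes vacuous and the clique deduction, together with the rest of your structural argument, goes through. As written, though, the characterization of the equality case is not fully established; this intermediate step needs to be added.
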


Our motivation for the present paper 
was an attempt to improve a result of Byskov, Madsen, and Skjernaa \cite{bymask}.
Improving earlier results of Schiermeyer \cite{sc},
they showed that every graph $G$ of order $n$
has at most $O\left(12^{\frac{n}{4}}\right)$
maximal induced bipartite subgraphs.
In fact, they show that every such subgraph $H$ 
has a bipartition $V(H)=A\cup B$ with $|A|\geq |B|$ such that 
$A$ is a maximal independent set of $G$ and 
$B$ is a maximal independent set of $G-A$.
Fixing such a bipartition for every maximal induced bipartite subgraphs $H$, 
counting these subgraphs according to the size $k$ of $A$,
and using both \eqref{e1} and \eqref{e2}
yields that their number is at most
\begin{eqnarray}\label{e3}
\sum\limits_{k=0}^{p}\,\,\, 
\underbrace{3^{4k-n}4^{n-3k}}_{\geq \# options\, for\, A}\,\,\cdot \,\,
\underbrace{3^{4k-(n-k)}4^{(n-k)-3k}}_{\geq \# options\, for\, B\, given\, A}
\,\,\,+\,\,\,
\sum\limits_{k=p+1}^{n}\,\,\, 
\underbrace{3^{4k-n}4^{n-3k}}_{\geq \# options\, for\, A}\,\,\cdot \,\,
\underbrace{3^{\frac{n-k}{3}},}_{\geq \# options\, for\, B\, given\, A}
\end{eqnarray}
where choosing $p=\left\lfloor\frac{n}{4}\right\rfloor$ 
yields the stated estimate $O\left(12^{\frac{n}{4}}\right)$.
In fact, the terms of the first sum increase with $k$
while the terms of the second sum decrease with $k$
and for $p=\left\lfloor\frac{n}{4}\right\rfloor$
the largest terms in both sums have the same order.
Therefore, trying to improve the result of Byskov et al.~\cite{bymask} 
motivates special attention to the case $k=\frac{n}{4}$.
By Theorem \ref{theorem2}, 
in this case the estimate \eqref{e2},
which is used twice in \eqref{e3} 
to bound the number of options for $A$,
is tight only if 
$G$ is the disjoint union of $k$ copies of $K_4$
and the argument of Byskov et al. \cite{bymask}
overcounts the maximal induced bipartite subgraphs.
The reason for this overcounting is that 
$K_4$ has $12$ ordered pairs $(A,B)$ of disjoint maximal independent sets
--- the $12$ ordered pairs of distinct vertices ---
but only $6$ maximal induced bipartite subgraphs
corresponding to its edges.

The above reasoning motivates to study upper bounds on 
${\rm mis}_k(G)$
for $K_4$-free graphs $G$ or, more generally, in $K_s$-free graphs.
Note that lower bounds for this setting were provided by He, Nie, and Spiro \cite{henisp}.
For $k=\frac{n}{4}$, Eppstein's bound \eqref{e2} simplifies to 
${\rm mis}_k(G) \leq 4^{\frac{n}{4}}$.
We improve this for $K_4$-free graphs and $k$ close to $\frac{n}{4}$.

\begin{theorem}\label{theorem1}
There exist $\epsilon, \delta>0$ with the following property.
If $G$ is a $K_4$-free graph of order $n$ and maximum degree at most $3$,
and $k\leq (1+\epsilon)\frac{n}{4}$ is a positive integer,
then 
\begin{eqnarray}\label{e4}
{\rm mis}_k(G) & \leq & (4-\delta)^{\frac{n}{4}}.
\end{eqnarray}
\end{theorem}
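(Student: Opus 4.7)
The plan is to prove Theorem~\ref{theorem1} by induction on $n$, adapting Eppstein's branching strategy and exploiting that the tight configurations of his bound are, by Theorem~\ref{theorem2}, disjoint unions of $K_3$'s and $K_4$'s. Under our hypotheses the $K_4$-freeness of $G$ excludes $K_4$ components, and the assumption $k\leq(1+\epsilon)n/4$ excludes $K_3$ components once $\epsilon$ is small (since a disjoint union of triangles has MISs of size exactly $n/3>(1+\epsilon)n/4$); thus no tight configuration for \eqref{e2} can survive, so one expects a uniform multiplicative improvement.

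For the inductive step, the main case is $\Delta(G)=3$. Pick $v$ with $\deg(v)=3$ and $N(v)=\{u_1,u_2,u_3\}$; the $K_4$-freeness forces $N(v)$ to be triangle-free, so $\{u_1,u_2,u_3\}$ induces the empty graph, a single edge, or a path. Standard Eppstein branching gives
\[
{\rm mis}_k(G)\leq{\rm mis}_{k-1}\bigl(G-N[v]\bigr)+\sum_{i=1}^{3}{\rm mis}_{k-1}\Bigl(G-\bigl(N[u_i]\cup\{v,u_1,\ldots,u_{i-1}\}\bigr)\Bigr),
\]
where the $i$-th subgraph has order $n-r_i$ with $r_i$ determined explicitly by the structure of $N(v)$ and the degrees of the $u_i$. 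Dividing by $(4-\delta)^{n/4}$, the induction closes provided $\sum_{i=0}^{3}(4-\delta)^{-r_i/4}\leq 1$. At $\delta=0$ this sum equals $1$ precisely in the ``local $K_4$'' configuration (namely $N(v)$ a triangle with each $u_i$ cubic), which is exactly what $K_4$-freeness excludes. A finite case-check over the three admissible structures of $N(v)$ and the possible degree-sequences on $u_1,u_2,u_3$ then bounds $\sum_i 4^{-r_i/4}$ by a fixed $1-\rho<1$, so by continuity the inequality persists for some $\delta>0$. The case $\Delta(G)\leq 2$ is immediate from the path/cycle structure combined with $k\leq(1+\epsilon)n/4$.

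The main obstacle is twofold. First, vertices of low degree (pendants and degree-$2$ vertices) leave less slack in the generic degree-$3$ recurrence when some $\deg(u_i)$ is small, so they must be treated with dedicated pre-processing branchings whose local recurrences are already strictly below $1$ at $\delta=0$. Second, and more delicately, $K_2$ components saturate the bound: each contributes a factor of $2$ to ${\rm mis}(G)$ for only $2$ vertices, which a priori precludes any bound $(4-\delta)^{n/4}$ with $\delta>0$. The way out is that each $K_2$ component forces $1$ unit of $k$ and $2$ units of $n$, so the hypothesis $k\leq(1+\epsilon)n/4$ controls the number of $K_2$ components linearly by the order of the non-$K_2$ part; the resulting $2^t$ factor then has to be absorbed by the main branching gain, possibly via a strengthened inductive statement that tracks $t$ explicitly. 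Carrying out this bookkeeping, and choosing $\epsilon$ small enough that the hypothesis is preserved through each branch, is the bulk of the technical work.
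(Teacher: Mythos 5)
Your plan takes a genuinely different route from the paper --- a local branching induction in the style of Eppstein rather than a global argument --- but as described it has two concrete gaps that are not mere bookkeeping. First, the claim that at $\delta=0$ the branching sum $\sum_i 4^{-r_i/4}$ equals $1$ precisely in the local $K_4$ configuration is false: a $K_3$ component gives $3\cdot 4^{-3/4}\approx 1.06>1$ and a $K_2$ component gives $2\cdot 4^{-1/2}=1$. You address $K_2$ components but not triangle components, and the latter are \emph{not} excluded by the hypotheses: only a graph that is entirely a disjoint union of triangles has $k=n/3$; a graph with $t$ triangle components plus a degree-$3$ remainder satisfies $k\ge \frac n4+\frac t4$, so up to $t\approx\epsilon n$ triangle components are compatible with $k\le(1+\epsilon)\frac n4$, and each contributes a multiplicative excess $3/4^{3/4}$ that no local recurrence can absorb. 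Second, the side condition $k\le(1+\epsilon)\frac n4$ is not inherited by the branches: the branch removing $N[v]$ with $|N[v]|=4$ and decrementing $k$ by one reduces the slack $(1+\epsilon)\frac n4-k$ by exactly $\epsilon$, so after linearly many such steps the hypothesis fails for \emph{every} choice of $\epsilon>0$; in that regime the only fallback is \eqref{e2}, which exceeds $(4-\delta)^{n/4}$ for $k$ above $\frac n4$, and the induction does not close. Both problems point to the same missing idea: the constraint $k\le(1+\epsilon)\frac n4$ must be converted into a global counting gain rather than a local one.

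That is what the paper does, with no induction at all. It fixes a single maximal independent set $I_0$ of order $k$ and uses the degree bound together with $k\le(1+\epsilon)\frac n4$ to show that all but $O(\epsilon n)$ vertices $u_i$ of $I_0$ have exactly three private neighbors, so that the blocks $V_i=N_G[u_i]$ are disjoint sets of size $4$ covering all but $O(\epsilon n)$ vertices of $G$. Every maximal independent set of order $k$ must meet each $V_i$, and, up to an exponentially negligible correction controlled by an entropy estimate, meets each in exactly one vertex, i.e.\ is essentially a transversal of the blocks; the trivial count of transversals is $4^{(1+O(\epsilon))\frac n4}$, recovering \eqref{e2}. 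The gain comes from $K_4$-freeness: each $V_i$ contains two non-adjacent neighbors $x_i,y_i$ of $u_i$, maximality forces a local ``goodness'' condition on the transversal at $V_i$, and a uniformly random transversal violates that condition with probability at least $\frac14$, independently across a linear number of well-separated blocks. Hence only a $(3/4)^{\Omega(n)}$ fraction of transversals can arise, which yields \eqref{e4}. To salvage your branching proof you would need a strengthened induction hypothesis tracking both the accumulated slack in $k\le(1+\epsilon)\frac n4$ and the bad local configurations simultaneously; the paper's transversal argument avoids this entirely.
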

Theorem \ref{theorem1} allows to deduce the following bound valid for all values of $k$
and without restricting the maximum degree.

\begin{corollary}\label{corollary1}
There exists $\eta_0\in \left(0,1\right)$ 
such that, for every $\eta\in \left(0,\eta_0\right)$,
the following holds.
If $G$ is a $K_4$-free graph of order $n$ and $k$ is an integer with $0\leq k\leq n$,
then 
\begin{eqnarray}\label{e5}
{\rm mis}_k(G) & \leq & 
(4-\eta)^{(5-\eta)k-n}(5-\eta)^{n-(4-\eta)k}.
\end{eqnarray}
\end{corollary}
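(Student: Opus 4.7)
I would prove the corollary by induction on $n$. Write $g(n,k)=(4-\eta)^{(5-\eta)k-n}(5-\eta)^{n-(4-\eta)k}$ for the target bound and choose $\eta_0>0$ small enough for the three inequalities below to hold simultaneously for every $\eta\in(0,\eta_0)$. The cases $n=0$ and $k=0$ are immediate (both sides are $1$ or $0$), so I assume $n,k\geq 1$ and split on $\Delta(G)$.

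First, if $\Delta(G)\geq 4$, pick $v$ with $d=\deg(v)\geq 4$. Every maximal independent set $I$ of $G$ either contains $v$ (in which case $I\setminus\{v\}$ is a maximal independent set of $G-N[v]$ of size $k-1$) or avoids $v$ (in which case $I$ is a maximal independent set of $G-v$ of size $k$). Both subgraphs are $K_4$-free and of smaller order, so the inductive hypothesis yields
\[{\rm mis}_k(G)\leq {\rm mis}_{k-1}(G-N[v])+{\rm mis}_k(G-v)\leq g(n-d-1,k-1)+g(n-1,k).\]
A direct computation shows $g(n-5,k-1)+g(n-1,k)=g(n,k)$ at $\eta=0$; analysing the derivative of $[g(n-d-1,k-1)+g(n-1,k)]/g(n,k)$ in $\eta$ at $\eta=0$ yields a negative value when $d=4$, and the ratio only decreases when $d$ grows, so the branching inequality $g(n-d-1,k-1)+g(n-1,k)\leq g(n,k)$ holds for every $d\geq 4$ and every $\eta\in(0,\eta_1)$ with $\eta_1>0$ sufficiently small.

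Second, if $\Delta(G)\leq 3$, then every maximal independent set $I$ satisfies $n=|N[I]|\leq\sum_{v\in I}(\deg(v)+1)\leq 4|I|$, so ${\rm mis}_k(G)=0$ whenever $k<n/4$. For $n/4\leq k\leq(1+\epsilon)\frac{n}{4}$, Theorem \ref{theorem1} gives ${\rm mis}_k(G)\leq(4-\delta)^{n/4}$; since $g(n,\cdot)$ is increasing (because $(5-\eta)\log(4-\eta)-(4-\eta)\log(5-\eta)>0$ for $\eta$ small), this reduces the desired inequality to $(4-\delta)\leq(4-\eta)^{1-\eta}(5-\eta)^\eta$, which holds for $\eta$ small relative to $\delta$ via a first-order expansion at $\eta=0$. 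For $k>(1+\epsilon)\frac{n}{4}$, Theorem \ref{theorem2} gives ${\rm mis}_k(G)\leq 3^{4k-n}4^{n-3k}$, and a direct computation shows $\log g-\log(3^{4k-n}4^{n-3k})\big|_{\eta=0}=(4k-n)\log\frac{16}{15}>\epsilon n\log\frac{16}{15}$, which dominates the $O(\eta n)$ first-order correction in $\eta$ as soon as $\eta$ is small relative to $\epsilon$.

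The main obstacle is that each of the three inequalities to be verified is tight at $\eta=0$: the branching at $d=4$; Theorem \ref{theorem1}'s bound at $k=n/4$; Eppstein's bound at $k=n/4$. Each is therefore established only by extracting first-order slack in $\eta$, and $\eta_0$ must be chosen as the minimum of the three resulting cutoffs so that all three inequalities remain valid simultaneously throughout the induction. Identifying the three critical slopes at $\eta=0$ explicitly and taking $\eta_0$ to be any positive value below all of them completes the proof.
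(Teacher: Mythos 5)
Your proposal is correct and follows essentially the same route as the paper: induction on $n$, branching on a maximum-degree vertex when $\Delta(G)\geq 4$, and, when $\Delta(G)\leq 3$, invoking Theorem \ref{theorem1} for $n/4\leq k\leq(1+\epsilon)\frac{n}{4}$ and Eppstein's bound \eqref{e2} for larger $k$, with $\eta_0$ taken small enough for all cases simultaneously. The only (cosmetic) difference is in the degree-$\geq 4$ branching, where your first-order expansion at $\eta=0$ can be avoided: bounding $g(n-d-1,k-1)\leq g\bigl(n-(5-\eta),k-1\bigr)$ and using $(4-\eta)+1=5-\eta$ makes the two branch terms sum to exactly $g(n,k)$ for every $\eta\geq 0$, which is how the paper handles it.
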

For $k$ close to $\frac{n}{4}$,
the bound \eqref{e5} improves Eppstein's bound \eqref{e2}
as well as a bound due to Nielsen \cite{ni}.
In fact, Nielsen's Theorem 2 in \cite{ni} implies
${\rm mis}_k(G) \leq 4^{5k-n}5^{n-4k}$
for every graph $G$ of order $n$ and every integer $k$ with $0\leq k\leq n$.
See Figure \ref{fig0} for an illustration.

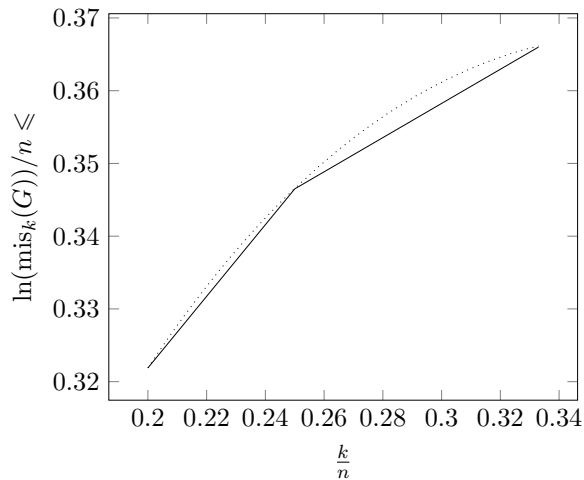
\begin{figure}[h]
    \centering
    \begin{tikzpicture}[scale=0.9]
    \begin{axis}[
    ylabel=$\ln({\rm mis}_k(G))/n\leq$,
    xlabel=$\frac{k}{n}$,
    samples=100,domain=0.2:0.333]
    \addplot[thin, dotted, mark=none ]plot (\x,\x*ln(1/\x);
    \addplot [thin] table {
0.2 0.32188
0.25 0.3465
0.333 0.366
};   
    \end{axis}
    \end{tikzpicture}
\caption{
The plot shows upper bounds on $\frac{\ln({\rm mis}_k(G))}{n}$ 
as a function of $x=\frac{k}{n}$ for the range $\frac{k}{n}\in \left[\frac{1}{5},\frac{1}{3}\right]$,
where $G$ is a graph of order $n$.
The straight line segment for $\frac{k}{n}\in \left[\frac{1}{4},\frac{1}{3}\right]$
is $\left(4\frac{k}{n}-1\right)\ln(3)+\left(1-3\frac{k}{n}\right)\ln(4)$, 
which corresponds to Eppstein's bound \eqref{e2}.
The straight line segment for $\frac{k}{n}\in \left[\frac{1}{5},\frac{1}{4}\right]$
is $\left(5\frac{k}{n}-1\right)\ln(4)+\left(1-4\frac{k}{n}\right)\ln(5)$, 
which corresponds to Nielsen's bound mentioned above.
The dotted curve is the smooth interpolation $x\mapsto x\ln\left(\frac{1}{x}\right)$,
which coincides with Nielsen's bound 
from \cite{ni}
whenever $\frac{1}{x}$ is an integer.
The bound \eqref{e5} from Corollary \ref{corollary1} corresponds to a line 
through the two points on the dashed curve for 
$x=\frac{1}{4-\eta}\in \left(\frac{1}{4},\frac{1}{3}\right)$
and 
$x=\frac{1}{5-\eta}\in \left(\frac{1}{5},\frac{1}{4}\right)$,
which slightly improves Eppstein's and Nielsen's bounds close to $x=\frac{1}{4}$.
See also the proof of Corollary \ref{corollary1}
and Figure \ref{figcor1}.}\label{fig0}
\end{figure}
Concerning the maximum number of maximal induced bipartite subgraphs,
our results allow the following.

\begin{corollary}\label{corollary2}
There exists $\nu>0$ with the following properties.
\begin{enumerate}[(i)]
\item If $G$ is a $K_4$-free graph of order $n$,
then $G$ has at most $O\left((12-\nu)^{\frac{n}{4}}\right)$
maximal induced bipartite subgraphs.
\item If $G$ is a graph of order $n$ and maximum degree at most $3$,
then $G$ has at most $O\left((12-\nu)^{\frac{n}{4}}\right)$
maximal induced bipartite subgraphs.
\end{enumerate}
\end{corollary}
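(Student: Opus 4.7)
The plan is to revisit the Byskov--Madsen--Skjernaa accounting in \eqref{e3}, replacing the bound ${\rm mis}_{\leq k}(G) \leq 3^{4k-n}4^{n-3k}$ on the number of choices for $A$ by the sharper estimate from Corollary~\ref{corollary1} (for (i)) or Theorem~\ref{theorem1} (for (ii)), and exploiting the observation that the $O(12^{n/4})$ bound is driven entirely by the peak of each summand at $k\approx n/4$. A direct computation confirms that, with $p=\lfloor n/4\rfloor$, the two sums in \eqref{e3} attain their common maximum $\Theta(12^{n/4})$ at $k=n/4$ and decay geometrically on either side; hence for any fixed $\alpha>0$ the contribution of $|k-n/4|>\alpha n$ is of order $C^{n}$ with $C<12^{1/4}$, and it suffices to improve the $A$-factor by a uniform exponential factor inside the window $|k-n/4|\leq \alpha n$.

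For part (i), I would plug Corollary~\ref{corollary1} into the $A$-factor while keeping Eppstein's bound on the $B$-factor, since $G-A$ has roughly $\frac{3n}{4}$ vertices and $|B|\leq \frac{n}{4}$ sits at the boundary where Corollary~\ref{corollary1} is no longer sharp. Evaluating \eqref{e5} at $k=n/4$ gives $(4-\eta)^{(1-\eta)n/4}(5-\eta)^{\eta n/4}$, and the one-variable check that the exponential rate $(1-\eta)\ln(4-\eta)+\eta\ln(5-\eta)$ has strictly negative derivative at $\eta=0$ — which reduces to $\ln(5/4)<1/4$ — produces a bound of the form $(4-\delta)^{n/4}$ for some $\delta>0$ when $\eta$ is chosen small. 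Combined with the unchanged $B$-factor $3^{n/4}$, this yields a peak of $(12-3\delta)^{n/4}$, and by continuity the same improvement (with possibly smaller $\delta$) extends uniformly over $|k-n/4|\leq \alpha n$. The geometric tails are then absorbed into the $O(\cdot)$ constant.

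For part (ii), I would first peel off $K_4$ components: in a graph of maximum degree at most $3$, every copy of $K_4$ must be a connected component. Writing $G=G_0\cup tK_4$ with $G_0$ being $K_4$-free of maximum degree at most $3$, the number of maximal induced bipartite subgraphs factorises as the product of that of $G_0$ and $6^t$. The argument of (i) applied to $G_0$, this time invoking Theorem~\ref{theorem1} in the peak window since $G_0$ satisfies its stronger hypothesis, gives $O((12-\nu_0)^{n_0/4})$ for $G_0$, while the $K_4$-contribution satisfies $6^t=(6^{1/4})^{4t}\leq (12-\nu_0)^{t}$ provided $\nu_0\leq 6$; multiplying the two produces the claimed bound for $G$.

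The main obstacle is the uniform-in-the-window improvement alluded to in the first step: one must check not merely that the improved $A$-factor beats $4^{n/4}$ at the single point $k=n/4$, but that its multiplicative ratio to the original bound stays bounded away from $1$ uniformly on $|k-n/4|\leq \alpha n$. This is a continuity/convexity exercise on the exponential-rate functions that appear, but it is the step where a careless constant would destroy the gain~$\nu$.
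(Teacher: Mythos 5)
Your proposal is correct and follows essentially the same route as the paper: for (i) you plug the improved bound of Corollary \ref{corollary1} into the Byskov--Madsen--Skjernaa accounting \eqref{e3} and exploit that the summands peak at $k\approx\frac{n}{4}$ and decay geometrically away from it (the paper handles the tails via monotonicity of the summands and a shifted cut-off $p=\lfloor(1+\xi)n/4\rfloor$ rather than a window, a cosmetic difference), and for (ii) you observe that every $K_4$ in a graph of maximum degree $3$ is a component contributing a factor $6<12-\nu$ (the paper phrases this as an induction rather than a factorisation). Your derivative check $\ln(5/4)<\tfrac14$ and the uniformity-over-the-window caveat you flag are exactly the right points to verify, and both go through.
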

All proofs are given in the next section.
Variations and further directions are discussed in a conclusion.

\section{Proofs}

First we establish our statement concerning the extremal graphs for \eqref{e2}.
For this we closely follow Eppstein's \cite{ep} argument.

\begin{proof}[Proof of Theorem \ref{theorem2}]
The proof is by induction on $n$.
For $n=0$, we have
${\rm mis}_{\leq k}(G)\leq 1\leq 3^{4k}4^{-3k}$ 
with equality throughout if and only if $k=0$,
in which case $G$ is indeed the disjoint union of $k$ copies of $K_3$ or $K_4$.
Henceforth, we assume that $n>0$.

First, suppose that $G$ has some vertex $u$ of degree at least $4$.
Note that every maximal independent set of $G$ is either a maximal independent set of $G-u$ or the union of $\{u\}$ and a maximal independent set of $G-N_G[u]$. Therefore, by induction, we obtain
\begin{align*}
{\rm mis}_{\leq k}(G)
&\leq {\rm mis}_{\leq k}(G-u)+{\rm mis}_{\leq k-1}(G-N_G[u])\\
&\leq 3^{4k-(n-1)}4^{(n-1)-3k}+3^{4(k-1)-(n-5)}4^{(n-5)-3(k-1)}\\
&<3^{4k-n}4^{n-3k}.
\end{align*}
In particular, equality in \eqref{e2} is impossible in this case.

Next, suppose that $G$ has maximum degree $3$.
Let $u$ be some vertex of degree $3$ in $G$.
Again, by induction, we obtain
\begin{align*}
{\rm mis}_{\leq k}(G)
&\leq {\rm mis}_{\leq k}(G-u)+{\rm mis}_{\leq k-1}(G-N_G[u])\\
&\leq 3^{4k-(n-1)}4^{(n-1)-3k}+3^{4(k-1)-(n-4)}4^{(n-4)-3(k-1)}\\
&=3^{4k-n}4^{n-3k},
\end{align*}
with equality throughout if and only if 
\begin{eqnarray}
&&\mbox{\it $G-u$ is the disjoint union of $k$ copies of $K_3$ or $K_4$ and}\label{ee1}\\
&&\mbox{\it $G-N_G[u]$ is the disjoint union of $k-1$ copies of $K_3$ or $K_4$.}\label{ee2}
\end{eqnarray}
If $G$ has a vertex $u$ of degree $3$ such that \eqref{ee1} or \eqref{ee2} fails,
then this implies ${\rm mis}_{\leq k}(G)<3^{4k-n}4^{n-3k}$.
Hence, we may assume that both \eqref{ee1} and \eqref{ee2} hold
for every vertex $u$ of degree $3$ in $G$.
Let $u$ be some vertex of degree $3$ in $G$ and 
let $K$ be a component of $G-u$ containing some neighbor $v$ of $u$.
Since $G$ has maximum degree $3$, 
it follows from \eqref{ee1} that $K$ is a copy of $K_3$
and that $v$ has degree $3$ in $G$.
If $N_G(u)\not=V(K)$, then $G-v$ is not the disjoint union of $k$ 
copies of $K_3$ or $K_4$, which is a contradiction.
It follows that $N_G(u)=V(K)$, which implies that $G$ is the disjoint union of $k$ 
copies of $K_3$ or $K_4$, as desired.

At this point, we may assume that $G$ has maximum degree at most $2$.
If $G$ has a vertex of degree $1$, then 
Eppstein shows 
${\rm mis}_{\leq k}(G)\leq \frac{8}{9}3^{4k-n}4^{n-3k}$
using very similar arguments as above.
If $G$ has an isolated vertex, 
then he shows 
${\rm mis}_{\leq k}(G)\leq \frac{16}{27}3^{4k-n}4^{n-3k}$.
If $G$ contains a cycle of length at least $4$, 
then he shows 
${\rm mis}_{\leq k}(G)\leq \frac{11}{12}3^{4k-n}4^{n-3k}$.
In all these cases, it follows that \eqref{e2} holds 
and that equality in \eqref{e2} is not possible.
The only remaining case is that $G$ is the disjoint union of triangles,
in which case the desired statement is trivial.
\end{proof}

\begin{proof}[Proof of Theorem \ref{theorem1}]
We do not give the explicit values of $\epsilon$ and $\delta$, 
but simply assume they are small enough so that all upcoming inequalities hold.
Let $G$, $n$, and $k$ be as in the statement.
Clearly, we may assume that some maximal independent set $I_0$ of $G$ 
has order exactly $k$.
Since $G$ has maximum degree at most $3$, this implies $k\geq \frac{n}{4}$.
Let $J_0=V(G)\setminus I_0$.
Let $I_1$ be the set of vertices in $I_0$ 
with at most $2$ neighbors in $J_0$.
Let $J_1$ be the set of vertices in $J_0$ 
with a neighbor in $I_1$.
Let $J_2$ be the set of vertices in $J_0\setminus J_1$
with at least $2$ neighbors in $I_0$.
Let $I_2$ be the set of vertices in $I_0$ 
with a neighbor in $J_2$.
Note that $|J_1|\leq 2|I_1|$,
$|I_2|\leq 3|J_2|$, and 
$I_1$ and $I_2$ are disjoint.
See Figure \ref{fig1} for an illustration.

Since $I_0$ is a maximal independent set,
every vertex in $J_0$ has a neighbor in $I_0$.
It follows that 
the number of edges of $G$ between $I_0$ and $J_0$ is 
\begin{itemize}
\item at least $|J_0|+|J_2|=n-k+|J_2|$
and
\item at most $3|I_0|-|I_1|=3k-|I_1|$,
\end{itemize}
which implies
\begin{eqnarray}\label{ea}
|I_1|+|J_2|&\leq & 4k-n\leq \epsilon n
\end{eqnarray}
and, hence,
\begin{eqnarray}\label{ec}
|J_1|+|J_2|&\leq &2|I_1|+|J_2|\stackrel{\eqref{ea}}{\leq} 2\epsilon n.
\end{eqnarray}
For $I_3=I_0\setminus (I_1\cup I_2)$ and $\ell=|I_3|$, 
we obtain
\begin{eqnarray}\label{eb}
\ell &=& |I_0|-|I_1|-|I_2|\geq k-|I_1|-3|J_2|\stackrel{\eqref{ea}}{\geq} k-3\epsilon n.
\end{eqnarray}
Let $I_3=\{ u_1,\ldots,u_\ell\}$.
By definition of $I_3$, 
every vertex $u_i$ in $I_3$ has exactly three neighbors in $J_0$, 
say $x_i$, $y_i$, and $z_i$,
and these three neighbors have $u_i$ as their unique neighbor in $I$. 
Recall that $I_0$ is an independent set, 
hence we have $N_G[u_i]=V_i$ for $V_i=\{ u_i,x_i,y_i,z_i\}$.
Since $G$ is $K_4$-free, 
we may assume that $x_i$ and $y_i$ are not adjacent for every $i\in [\ell]$.
See Figure \ref{fig1} for an illustration.

\begin{figure}[h]\centering
\unitlength 1mm 
\linethickness{0.4pt}
\ifx\plotpoint\undefined\newsavebox{\plotpoint}\fi 
\begin{picture}(100,51)(0,0)
\put(42,5){\circle*{1}}
\put(52,5){\circle*{1}}
\put(72,5){\circle*{1}}
\put(92,5){\circle*{1}}
\put(42,15){\circle*{1}}
\put(52,15){\circle*{1}}
\put(72,15){\circle*{1}}
\put(92,15){\circle*{1}}
\put(42,25){\circle*{1}}
\put(52,25){\circle*{1}}
\put(72,25){\circle*{1}}
\put(92,25){\circle*{1}}
\put(42,28){\makebox(0,0)[cc]{$z_1$}}
\put(52,28){\makebox(0,0)[cc]{$z_2$}}
\put(72,28){\makebox(0,0)[cc]{$z_i$}}
\put(92,28){\makebox(0,0)[cc]{$z_\ell$}}
\put(42,18){\makebox(0,0)[cc]{$y_1$}}
\put(52,18){\makebox(0,0)[cc]{$y_2$}}
\put(72,18){\makebox(0,0)[cc]{$y_i$}}
\put(92,18){\makebox(0,0)[cc]{$y_\ell$}}
\put(42,2){\makebox(0,0)[cc]{$x_1$}}
\put(52,2){\makebox(0,0)[cc]{$x_2$}}
\put(72,2){\makebox(0,0)[cc]{$x_i$}}
\put(92,2){\makebox(0,0)[cc]{$x_\ell$}}
\put(39,0){\framebox(6,30)[cc]{}}
\put(49,0){\framebox(6,30)[cc]{}}
\put(69,0){\framebox(6,30)[cc]{}}
\put(89,0){\framebox(6,30)[cc]{}}
\put(42,45){\circle*{1}}
\put(52,45){\circle*{1}}
\put(72,45){\circle*{1}}
\put(92,45){\circle*{1}}
\put(42,48){\makebox(0,0)[cc]{$u_1$}}
\put(52,48){\makebox(0,0)[cc]{$u_2$}}
\put(72,48){\makebox(0,0)[cc]{$u_i$}}
\put(92,48){\makebox(0,0)[cc]{$u_\ell$}}
\put(42,45){\line(0,-1){15}}
\put(52,45){\line(0,-1){15}}
\put(72,45){\line(0,-1){15}}
\put(92,45){\line(0,-1){15}}
\put(62,15){\makebox(0,0)[cc]{$\ldots$}}
\put(82,15){\makebox(0,0)[cc]{$\ldots$}}
\put(62,45){\makebox(0,0)[cc]{$\ldots$}}
\put(82,45){\makebox(0,0)[cc]{$\ldots$}}
\put(41.93,14.93){\line(0,-1){.9091}}
\put(41.93,13.112){\line(0,-1){.9091}}
\put(41.93,11.293){\line(0,-1){.9091}}
\put(41.93,9.475){\line(0,-1){.9091}}
\put(41.93,7.657){\line(0,-1){.9091}}
\put(41.93,5.839){\line(0,-1){.9091}}
\put(51.93,14.93){\line(0,-1){.9091}}
\put(51.93,13.112){\line(0,-1){.9091}}
\put(51.93,11.293){\line(0,-1){.9091}}
\put(51.93,9.475){\line(0,-1){.9091}}
\put(51.93,7.657){\line(0,-1){.9091}}
\put(51.93,5.839){\line(0,-1){.9091}}
\put(71.93,14.93){\line(0,-1){.9091}}
\put(71.93,13.112){\line(0,-1){.9091}}
\put(71.93,11.293){\line(0,-1){.9091}}
\put(71.93,9.475){\line(0,-1){.9091}}
\put(71.93,7.657){\line(0,-1){.9091}}
\put(71.93,5.839){\line(0,-1){.9091}}
\put(91.93,14.93){\line(0,-1){.9091}}
\put(91.93,13.112){\line(0,-1){.9091}}
\put(91.93,11.293){\line(0,-1){.9091}}
\put(91.93,9.475){\line(0,-1){.9091}}
\put(91.93,7.657){\line(0,-1){.9091}}
\put(91.93,5.839){\line(0,-1){.9091}}
\put(100,35){\line(-1,0){100}}
\put(35,0){\line(0,1){51}}
\put(23,37){\framebox(10,14)[cc]{}}
\put(23,16){\framebox(10,14)[cc]{}}
\put(11,37){\framebox(10,14)[cc]{}}
\put(11,16){\framebox(10,14)[cc]{}}
\put(16,40){\circle*{1}}
\put(28,27){\circle*{1}}
\put(18,28){\line(-1,6){2}}
\put(16,40){\line(-1,-6){2}}
\put(30,39){\line(-1,-6){2}}
\put(28,27){\line(-1,6){2}}
\put(16,25){\makebox(0,0)[cc]{$\underbrace{}_{\leq 2}$}}
\put(28,42){\makebox(0,0)[cc]{$\overbrace{}^{\geq 2}$}}
\put(16,48){\makebox(0,0)[cc]{$I_1$}}
\put(28,48){\makebox(0,0)[cc]{$I_2$}}
\put(16,19){\makebox(0,0)[cc]{$J_1$}}
\put(28,19){\makebox(0,0)[cc]{$J_2$}}
\put(6,37){\makebox(0,0)[cc]{$I_0$}}
\put(6,32){\makebox(0,0)[cc]{$J_0$}}
\put(37,37){\makebox(0,0)[cc]{$I_3$}}
\end{picture}
\caption{The structure of $G$ induced by $I_0$.}\label{fig1}
\end{figure}
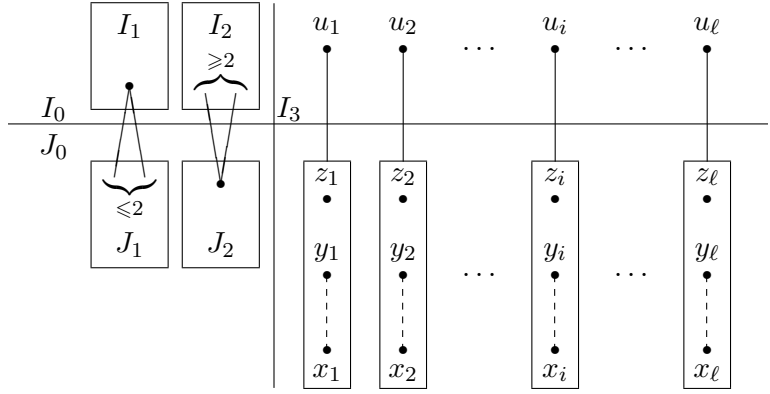
Let ${\cal I}$ be the set of all maximal independent sets of $G$ of order exactly $k$.
For every $i\in [\ell]$ and $I\in \mathcal{I}$, we have $|I\cap V_i|\geq 1$, 
for otherwise $I\cup \{u_i\}$ would be an independent set, 
contradicting the maximality of $I$.
For every $S\subseteq [\ell]$, 
let ${\cal I}_S$ be the set of all sets $I$ in ${\cal I}$ with 
$S=\{ i\in [\ell]:|I\cap V_i|\geq 2\}.$
If ${\cal I}_S$ is non-empty, then $k\geq \ell+|S|\stackrel{\eqref{eb}}{\geq} k-3\epsilon n+|S|$, which implies 
\begin{eqnarray}\label{ed}
|S|&\leq & 3\epsilon n.
\end{eqnarray}
Now, let $S\subseteq [\ell]$ be such that $|S|\leq 3\epsilon n$.
Let $I_4=\{ u_i:i\in [\ell]\setminus S\}$.
Clearly, we have 
\begin{eqnarray}\label{ee}
|I_4|&\geq & |I_3|-|S|
=\ell-|S|
\stackrel{\eqref{eb},\eqref{ed}}{\geq} k-3\epsilon n-3\epsilon n
= k-6\epsilon n.
\end{eqnarray}
Let $I_5$ be the set of all vertices $u_i$ in $I_4$ 
such that neither $x_i$ nor $y_i$ have a neighbor 
outside of 
$$U=\bigcup_{u_j\in I_4}V_j=\bigcup_{j\in [\ell]\setminus S}V_j.$$
For every $i\in S$, there are at most $6$ edges between $V_i$ and $U$.
For every $v\in J_1$, 
there are at most two edges between $v$ and $U$.
Finally,
for every $v\in J_2$, 
there is at most one edge between $v$ and $U$.
Therefore, we obtain
\begin{eqnarray}\label{ef}
|I_5|&\geq &|I_4|-6|S|-2|J_1|-|J_2|
\stackrel{\eqref{ec},\eqref{ed},\eqref{ee}}{\geq} k-6\epsilon n-18\epsilon n-4\epsilon n
=k-28\epsilon n.
\end{eqnarray}
Let $H$ be the auxiliary graph whose vertices are the sets $V_i$ with $u_i\in I_5$
and in which $V_i$ and $V_j$ are adjacent 
if there is some edge in $G$ between $V_i$ and $V_j$.
By construction, the maximum degree of $H$ is at most $6$.
Considering a maximal independent set in the square of $H$ 
yields a subset $I_6$ of $I_5$ with 
\begin{eqnarray}\label{eg}
|I_6|\geq \frac{|I_5|}{1+6+6\cdot 5}
\stackrel{\eqref{ef}}{\geq} \frac{k-28\epsilon n}{37}
\end{eqnarray}
such that, 
for every two distinct vertices $u_i$ and $u_j$ in $I_6$,
we have $N_G[v]\cap N_G[w]=\emptyset$ 
for every $v\in \{ x_i,y_i\}$ and every $w\in \{ x_j,y_j\}$.

Consider the partition ${\cal P}$ of $U$ given by the sets $V_i$ with $u_i\in I_4$.
A {\it transversal} of this partition ${\cal P}$ is a subset $T$ of $U$
intersecting each $V_i$ with $u_i\in I_4$ in exactly one vertex,
and such a transversal $T$ is {\it good} if, 
for every $u_i\in I_4$ such that $T$ contains $v\in \{ x_i,y_i\}$, 
the set $T$ also contains a neighbor of 
the unique vertex $v'$ in $\{ x_i,y_i\}\setminus \{ v\}$.
Note that, 
for every maximal independent set $I$ in ${\cal I}_S$,
the set $I\cap U$ is a good transversal of ${\cal P}$.
Therefore, 
if ${\cal T}$ is the set of all good transversals of ${\cal P}$,
then 
\begin{eqnarray}\label{eh}
|{\cal I}_S|&\leq &2^{V\setminus U}\cdot |{\cal T}|
\leq 2^{17\epsilon n}\cdot |{\cal T}|
=4^{34\epsilon \frac{n}{4}}\cdot |{\cal T}|.
\end{eqnarray}
Now, we consider the uniform distribution 
on the set of all $4^{|I_4|}$ transversals of ${\cal P}$
induced by selecting, for each vertex $u_i$ in $I_4$,
exactly one of the $4$ vertices of $V_i$ uniformly and independently at random.
Consider a random transversal $T$ of ${\cal P}$ that is selected in this way.
For each vertex $u_i\in I_5$, 
the probability that $T$ contains $v\in \{ x_i,y_i\}$ 
but no neighbor of the unique vertex $v'$ 
in $\{ x_i,y_i\}\setminus \{ v\}$ 
is at least, see Figure \ref{fig2}, 
\begin{eqnarray}\label{eq}
2\min\left\{ \frac{1}{4},\frac{1}{4}\cdot\frac{3}{4},\frac{1}{4}\cdot\frac{1}{2},\frac{1}{4}\cdot\frac{3}{4}\cdot\frac{3}{4}\right\}\geq \frac{1}{4}.
\end{eqnarray}

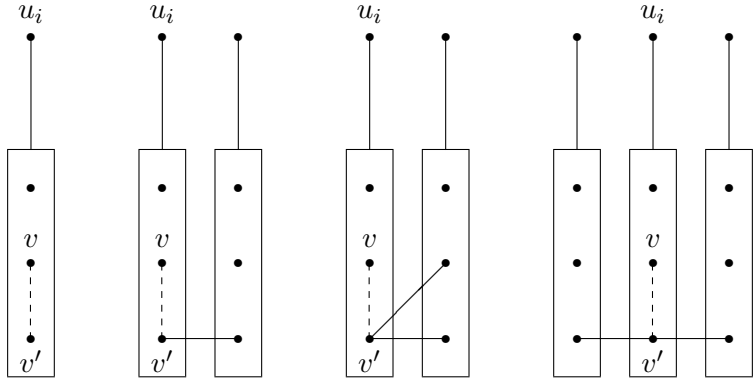
\begin{figure}[h]
\begin{center}
\unitlength 1mm 
\linethickness{0.4pt}
\ifx\plotpoint\undefined\newsavebox{\plotpoint}\fi 
\begin{picture}(6,48)(0,0)
\put(3,5){\circle*{1}}
\put(3,15){\circle*{1}}
\put(3,25){\circle*{1}}
\put(3,18){\makebox(0,0)[cc]{$v$}}
\put(3,2){\makebox(0,0)[cc]{$v'$}}
\put(0,0){\framebox(6,30)[cc]{}}
\put(3,45){\circle*{1}}
\put(3,48){\makebox(0,0)[cc]{$u_i$}}
\put(3,45){\line(0,-1){15}}
\put(2.93,14.93){\line(0,-1){.9091}}
\put(2.93,13.112){\line(0,-1){.9091}}
\put(2.93,11.293){\line(0,-1){.9091}}
\put(2.93,9.475){\line(0,-1){.9091}}
\put(2.93,7.657){\line(0,-1){.9091}}
\put(2.93,5.839){\line(0,-1){.9091}}
\end{picture}\hspace{1cm}
\unitlength 1mm 
\linethickness{0.4pt}
\ifx\plotpoint\undefined\newsavebox{\plotpoint}\fi 
\begin{picture}(16,48)(0,0)
\put(13,5){\circle*{1}}
\put(3,5){\circle*{1}}
\put(13,15){\circle*{1}}
\put(3,15){\circle*{1}}
\put(13,25){\circle*{1}}
\put(3,25){\circle*{1}}
\put(3,18){\makebox(0,0)[cc]{$v$}}
\put(3,2){\makebox(0,0)[cc]{$v'$}}
\put(10,0){\framebox(6,30)[cc]{}}
\put(0,0){\framebox(6,30)[cc]{}}
\put(13,45){\circle*{1}}
\put(3,45){\circle*{1}}
\put(3,48){\makebox(0,0)[cc]{$u_i$}}
\put(13,45){\line(0,-1){15}}
\put(3,45){\line(0,-1){15}}
\put(2.93,14.93){\line(0,-1){.9091}}
\put(2.93,13.112){\line(0,-1){.9091}}
\put(2.93,11.293){\line(0,-1){.9091}}
\put(2.93,9.475){\line(0,-1){.9091}}
\put(2.93,7.657){\line(0,-1){.9091}}
\put(2.93,5.839){\line(0,-1){.9091}}
\put(3,5){\line(1,0){10}}
\end{picture}\hspace{1cm}
\unitlength 1mm 
\linethickness{0.4pt}
\ifx\plotpoint\undefined\newsavebox{\plotpoint}\fi 
\begin{picture}(16,48)(0,0)
\put(13,5){\circle*{1}}
\put(3,5){\circle*{1}}
\put(13,15){\circle*{1}}
\put(3,15){\circle*{1}}
\put(13,25){\circle*{1}}
\put(3,25){\circle*{1}}
\put(3,18){\makebox(0,0)[cc]{$v$}}
\put(3,2){\makebox(0,0)[cc]{$v'$}}
\put(10,0){\framebox(6,30)[cc]{}}
\put(0,0){\framebox(6,30)[cc]{}}
\put(13,45){\circle*{1}}
\put(3,45){\circle*{1}}
\put(3,48){\makebox(0,0)[cc]{$u_i$}}
\put(13,45){\line(0,-1){15}}
\put(3,45){\line(0,-1){15}}
\put(2.93,14.93){\line(0,-1){.9091}}
\put(2.93,13.112){\line(0,-1){.9091}}
\put(2.93,11.293){\line(0,-1){.9091}}
\put(2.93,9.475){\line(0,-1){.9091}}
\put(2.93,7.657){\line(0,-1){.9091}}
\put(2.93,5.839){\line(0,-1){.9091}}
\put(3,5){\line(1,0){10}}
\put(3,5){\line(1,1){10}}
\end{picture}\hspace{1cm}
\unitlength 1mm 
\linethickness{0.4pt}
\ifx\plotpoint\undefined\newsavebox{\plotpoint}\fi 
\begin{picture}(26,48)(0,0)
\put(3,5){\circle*{1}}
\put(23,5){\circle*{1}}
\put(13,5){\circle*{1}}
\put(3,15){\circle*{1}}
\put(23,15){\circle*{1}}
\put(13,15){\circle*{1}}
\put(3,25){\circle*{1}}
\put(23,25){\circle*{1}}
\put(13,25){\circle*{1}}
\put(13,18){\makebox(0,0)[cc]{$v$}}
\put(13,2){\makebox(0,0)[cc]{$v'$}}
\put(0,0){\framebox(6,30)[cc]{}}
\put(20,0){\framebox(6,30)[cc]{}}
\put(10,0){\framebox(6,30)[cc]{}}
\put(3,45){\circle*{1}}
\put(23,45){\circle*{1}}
\put(13,45){\circle*{1}}
\put(13,48){\makebox(0,0)[cc]{$u_i$}}
\put(3,45){\line(0,-1){15}}
\put(23,45){\line(0,-1){15}}
\put(13,45){\line(0,-1){15}}
\put(12.93,14.93){\line(0,-1){.9091}}
\put(12.93,13.112){\line(0,-1){.9091}}
\put(12.93,11.293){\line(0,-1){.9091}}
\put(12.93,9.475){\line(0,-1){.9091}}
\put(12.93,7.657){\line(0,-1){.9091}}
\put(12.93,5.839){\line(0,-1){.9091}}
\put(13,5){\line(-1,0){10}}
\put(13,5){\line(1,0){10}}
\end{picture}
\end{center}
\caption{Let $\{ v,v'\}=\{ x_i,y_i\}$.
The probability $q$ that $T$ contains $v$ but no neighbor of $v'$
depends on the number $d$ of neighbors of $v'$ outside of $V_i$
and their distribution.
In the figure we illustrate --- from left to right ---- 
the four possible cases 
$d=0$, 
$d=1$,
$d=2$ and both neighbors of $v'$ outside of $V_i$ are in the same $V_j$,
and 
$d=2$ and the two neighbors of $v'$ outside of $V_i$ 
are in distinct $V_j$'s.
In the first case, we have $d=0$, 
$v'$ has all its neighbors in $V_i$,
$v$ lies in $T$ with probability $1/4$,
and, hence, $q=\frac{1}{4}$.
In the second case, we have $d=1$,
$v$ lies in $T$ with probability $1/4$,
the unique neighbor of $v'$ outside of $V_i$ lies outside of $T$
with probability  $3/4$, and, by the independence of these events,
$q=\frac{1}{4}\cdot \frac{1}{4}$.
In the third case, 
$v$ lies in $T$ with probability $1/4$,
 $T$ contains none of the two neighbors of $v'$ in $V_j$
with probability $1/2$, and, by the independence of these events,
$q=\frac{1}{4}\cdot \frac{1}{2}$.
Finally, 
in the fourth case, 
$v$ lies in $T$ with probability $1/4$,
each of the two neighbors of $v'$ outside of $V_i$
lie outside of $T$ with probability $3/4$, and,
by the independence of these events,
$q=\frac{1}{4}\cdot \frac{3}{4}\cdot \frac{3}{4}$.
The initial factor $2$ in \eqref{eq} reflects that 
$v$ may be $x_i$ or $y_i$.}\label{fig2}
\end{figure}
By the construction of $I_6$,
it follows that the probability $p$ that $T$ is good satisfies
\begin{eqnarray}\label{ei}
p\leq \left(1-\frac{1}{4}\right)^{|I_6|}
\stackrel{\eqref{eg}}{\leq} 
\left(\frac{3}{4}\right)^{\frac{k-28\epsilon n}{37}}
\stackrel{k\geq n/4}{\leq} 
4^{-\left(1-\frac{\log_2(3)}{2}\right)\frac{n/4-28\epsilon n}{37}}
=4^{-\left(1-\frac{\log_2(3)}{2}\right)\frac{(1-112\epsilon)}{37}\frac{n}{4}},
\end{eqnarray}
which implies that 
\begin{eqnarray}\label{ej}
|{\cal T}|
&\leq & p\cdot 4^{|I_4|}
\stackrel{\eqref{ei},|I_4|\leq (1+\epsilon)\frac{n}{4}}{\leq} 
4^{-\left(1-\frac{\log_2(3)}{2}\right)\frac{(1-112\epsilon)}{37}\frac{n}{4}}
\cdot 4^{(1+\epsilon)\frac{n}{4}}
=
4^{\left(1+\epsilon-\left(1-\frac{\log_2(3)}{2}\right)\frac{(1-112\epsilon)}{37}\right)\frac{n}{4}}
\end{eqnarray}
and, hence,
\begin{eqnarray}\label{ek}
|{\cal I}_S|
&\stackrel{\eqref{eh}}{\leq}  & 4^{34\epsilon \frac{n}{4}}\cdot |{\cal T}|
\stackrel{\eqref{ej}}{\leq} 
4^{\left(1+35\epsilon-\left(1-\frac{\log_2(3)}{2}\right)\frac{(1-112\epsilon)}{37}\right)\frac{n}{4}}.
\end{eqnarray}
Now, we obtain
\begin{eqnarray*}
|{\cal I}| 
&\stackrel{\eqref{ed}}{=}& \sum\limits_{S\subseteq [\ell]:|S|\leq 3\epsilon n}|{\cal I}_S|\\
&\stackrel{\eqref{ek}}{\leq} &
4^{\left(1+35\epsilon-\left(1-\frac{\log_2(3)}{2}\right)\frac{(1-112\epsilon)}{37}\right)\frac{n}{4}}\cdot\sum\limits_{s=0}^{\lfloor 3\epsilon n\rfloor}{|I_3|\choose s}\\
& \stackrel{|I_3|\leq (1+\epsilon)\frac{n}{4}}{\leq} &
4^{\left(1+35\epsilon-\left(1-\frac{\log_2(3)}{2}\right)\frac{(1-112\epsilon)}{37}\right)\frac{n}{4}}
\cdot 2^{h\left(\frac{12\epsilon}{1+\epsilon}\right)(1+\epsilon)\frac{n}{4}}\\
&=& 
4^{\left(1+h\left(\frac{12\epsilon}{1+\epsilon}\right)\frac{(1+\epsilon)}{2}+35\epsilon-\left(1-\frac{\log_2(3)}{2}\right)\frac{(1-112\epsilon)}{37}\right)\frac{n}{4}},
\end{eqnarray*}
where $h(\alpha)=-\alpha\log_2(\alpha)-(1-\alpha)\log_2(1-\alpha)$
and we use the well known estimate
$\sum\limits_{s=0}^{\lfloor \alpha N\rfloor}{N\choose s}
\leq 2^{h(\alpha)N}$ for $0<\alpha\leq \frac{1}{2}$.
Since
$$
\lim\limits_{\epsilon\to 0^+}\left(1+h\left(\frac{12\epsilon}{1+\epsilon}\right)\frac{(1+\epsilon)}{2}+35\epsilon-\left(1-\frac{\log_2(3)}{2}\right)\frac{(1-112\epsilon)}{37}\right)=
1-\frac{1}{37}\left(1-\frac{\log_2(3)}{2}\right)<1,$$
the desired statement follows.
\end{proof}

\begin{proof}[Proof of Corollary \ref{corollary1}]
Let $G$, $n$, and $k$ be as in the statement.
At some points within the proof, 
we use estimates that require $\eta$ be to sufficiently small.
The proof is by induction on $n$.
For $n=0$, we have $k=0$ nd the statement is trivial.
Now, let $n>0$.
If $G$ has maximum degree $d\geq 4\geq 4-\eta$, 
then, let the vertex $u$ be of maximum degree.
By induction (I), we obtain
\begin{eqnarray*}
{\rm mis}_k(G) & \leq & 
{\rm mis}_k(G-u)
+{\rm mis}_{k-1}(G-N_G[u])\\
&\stackrel{(I)}{\leq}&
(4-\eta)^{(5-\eta)k-(n-1)}(5-\eta)^{(n-1)-(4-\eta)k}\\
&&+(4-\eta)^{(5-\eta)(k-1)-(n-(d+1))}(5-\eta)^{(n-(d+1))-(4-\eta)(k-1)}\\
&\stackrel{\eta\geq 0}{\leq}&
(4-\eta)^{(5-\eta)k-(n-1)}(5-\eta)^{(n-1)-(4-\eta)k}\\
&&+(4-\eta)^{(5-\eta)(k-1)-(n-(5-\eta))}(5-\eta)^{(n-(5-\eta))-(4-\eta)(k-1)}\\
&=& 
(4-\eta)^{(5-\eta)k-n}(5-\eta)^{n-(4-\eta)k}.
\end{eqnarray*}
Note that here we only require $\eta\geq 0$.

\begin{figure}[h]
    \centering
\begin{tikzpicture}[scale=0.8]
\begin{axis}[
y label style={at={(-0.05,0.5)}},
    samples=100,domain=0.2:0.333]
    \addplot[thin, dotted, mark=none ]plot (\x,\x*ln(1/\x);
    \addplot [thin] table {
0.2 0.32188
0.25 0.3465
0.333 0.366
};
\addplot [thin] table {
0.25 0.343
0.27 0.343
};
\addplot [dotted] table {
0.25 0.343
0.25 0.3465
};
\addplot [dotted] table {
0.27 0.343
0.27 0.3511
};
\addplot [dashed] table {
0.2173 0.331
0.277 0.355
};
\end{axis}
\end{tikzpicture}
\caption{
The plot shows upper bounds on $\frac{\ln({\rm mis}_k(G))}{n}$ 
as a function of $x=\frac{k}{n}$.
As in Figure \ref{fig0}, two of the staight line segments 
correspond
to Eppstein's bound \eqref{e2} in $\left[\frac{1}{4},\frac{1}{3}\right]$ 
and 
to Nielsen's bound in $\left[\frac{1}{5},\frac{1}{4}\right]$.
The horizontal line segments illustrates a bound as in \eqref{e4} 
from Theorem \ref{theorem1} 
in $\left[\frac{1}{4},\frac{1+\epsilon}{4}\right]$.
Continuously increasing $\eta$ from $0$ 
(Nielsen's bound)
towards $1$
(Eppstein's bound)
yields the dashed line segment corresponding to \eqref{e5} 
as in Corollary \ref{corollary1}.
In fact, the plot illustrates $\eta=0.4$.}
\label{figcor1}
\end{figure}
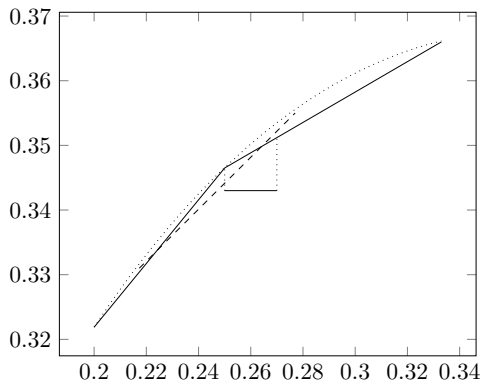
Hence, we may assume that $G$ has maximum degree at most $3$.
Fix $\epsilon, \delta>0$ as in Theorem \ref{theorem1}.
See Figure \ref{figcor1} for an illustration.
If $k \leq (1+\epsilon)\frac{n}{4}$,
then \eqref{e4} implies \eqref{e5} provided that $\eta>0$
is sufficiently small in terms of $\epsilon$ and $\delta$.
For $k>(1+\epsilon)\frac{n}{4}$, 
then \eqref{e2} implies \eqref{e5} 
provided that $\eta>0$
is sufficiently small in terms of $\epsilon$.
This completes the proof.
\end{proof}

\begin{proof}[Proof of Corollary \ref{corollary2}]
(i) Let $G$ and $n$ be as in the statement.
Let $G$ have ${\rm mibs}$ maximal induced bipartite subgraphs.
Counting these subgraphs as explained before \eqref{e3} 
and using implies \eqref{e1}, \eqref{e2}, and \eqref{e5}
implies ${\rm mibs} \leq {\rm mibs}_1+{\rm mibs}_2$, where
\begin{eqnarray*}
{\rm mibs}_1 &=& \sum\limits_{k=0}^{p}
(4-\eta)^{(5-\eta)k-n}(5-\eta)^{n-(4-\eta)k}3^{4k-(n-k)}4^{(n-k)-3k}\mbox{ and }\\
{\rm mibs}_2 &=& \sum\limits_{k=p+1}^{n}3^{4k-n}4^{n-3k}3^{\frac{n-k}{3}}
\end{eqnarray*}
for every integer $p$ with $0\leq p\leq n$.

Since 
$(5-\eta)\ln(4-\eta)-(4-\eta)\ln(5-\eta)+5\ln(3)-4\ln(4)>0$ for $\eta\in [0,1]$,
the terms in the sum ${\rm mibs}_1$ are increasing in $k$.
Since $4\ln(3)-3\ln(4)-\frac{\ln\left(3\right)}{3}<0$,
the terms in the sum ${\rm mibs}_2$ are decreasing in $k$.
For $\eta=0$ and $p=\left\lfloor\frac{n}{4}\right\rfloor$,
all terms in ${\rm mibs}_1$ and ${\rm mibs}_2$ are
bounded by $12^{\frac{n}{4}}$.
Now, let $\eta>0$ as in Corollary \ref{corollary1}.
Choosing 
$p=\left\lfloor\frac{(1+\xi)n}{4}\right\rfloor$
for some sufficiently small $\xi>0$ yields that
${\rm mibs}_1$ and ${\rm mibs}_2$
are $O\left((12-\nu)^{\frac{n}{4}}\right)$
for some $\nu>0$,
which completes the proof of (i).

\bigskip

\noindent (ii) Let $G$ and $n$ be as in the statement.
The proof is by induction on $n$.
For $n=0$, the statement is trivial.
Now, let $n>0$.
If $G$ is $K_4$-free, then (i) implies the statement.
If $G$ contains a copy $K$ of $K_4$, 
then the maximum degree condition implies that $K$ is a component of $G$. 
Observe that every maximal induced bipartite subgraphs of $G$ 
contains precisely two vertices of $K$.
Since $6\cdot (12-\nu)^{\frac{n-4}{4}}<(12-\nu)^{\frac{n}{4}}$,
the statement follows by induction.
\end{proof}

\section{Conclusion}

Our initial motivation led us to the case $k=\frac{n}{4}$.
Nevertheless, simple adaptions within the proof of Theorem \ref{theorem1} 
easily allow to show:
\begin{quote}
{\it Let $s$ be a positive integer at least $4$. There exist $\epsilon_s, \delta_s>0$ with the following property.
If $G$ is a $K_s$-free graph of order $n$ and maximum degree at most $s-1$,
and $k\leq (1+\epsilon_s)\frac{n}{s}$ is a positive integer,
then ${\rm mis}_k(G) \leq (s-\delta_s)^{\frac{n}{s}}$.}
\end{quote}
Again this improves Nielsen's bound \cite{ni} for $K_s$-free graphs 
and $k$ close to $\frac{n}{s}$.

The number of maximal independent sets in triangle-free graphs 
has been studied by Hujter and Tuza \cite{hutu}. 
Palmer and Patk\'os \cite{papa} showed a common generalization 
of the bounds of Moon \& Moser \cite{momo,mimu}
and Hujter \& Tuza \cite{hutu}.
In this context it seems interesting 
to investigate ${\rm mis}_k(G)$ 
--- and not just ${\rm mis}(G)$ ---
for triangle-free graphs $G$ for the full range of values of $k$.

\paragraph{Acknowledgment} 
The first and third authors were partially supported by
the Deutsche Forschungsgemeinschaft 
(DFG, German Research Foundation) -- project number 545935699.
The second author is supported by  JST ASPIRE, grant number JPMJAP2302.

\end{document}